\documentclass[submission,copyright,creativecommons]{eptcs}
\providecommand{\event}{GASCom 2026} 

\usepackage{iftex}

\ifpdf
  \usepackage{underscore}         
  \usepackage[T1]{fontenc}        
\else
  \usepackage{breakurl}           
\fi

\usepackage[utf8]{inputenc}
\usepackage[english]{babel}

\usepackage{amsfonts}
\usepackage{amsmath}
\usepackage{amssymb}
\usepackage{amsthm}
\usepackage{hyperref}
\usepackage[usenames]{color}

\usepackage{caption}
\usepackage{algorithm}
\usepackage{etoolbox}\AtBeginEnvironment{algorithmic}{\small} 
\usepackage[noend]{algpseudocode}
\usepackage{mathrsfs}

\usepackage{tikz}
\usetikzlibrary{
	calc,
	intersections,
	through,
	backgrounds,
	arrows
}

\theoremstyle{definition}
\newtheorem{theorem}{Theorem}[section]
\newtheorem{lemma}[theorem]{Lemma}
\newtheorem{proposition}[theorem]{Proposition}
\newtheorem{corollary}[theorem]{Corollary}
\newtheorem{definition}[theorem]{Definition}

\newtheorem{fact}[theorem]{Fact}

\theoremstyle{remark}
\newtheorem{remark}[theorem]{Remark}

\renewcommand{\le}{\leqslant}
\renewcommand{\ge}{\geqslant}

\newcommand{\La}{\Lambda}

\def\IntNode[#1][#2]{
 \filldraw[black] (#1) circle (#2*2pt);
}

\def\Anchor[#1][#2]{
 \filldraw[white] (#1) circle (#2*2pt);
 \draw[black] (#1) circle (#2*2pt);
}

\def\Leaf[#1][#2]{
 \filldraw[white] (#1)++(-#2*2pt,-#2*2pt) rectangle ++(#2*4pt,#2*4pt);
 \draw[black] (#1)++(-#2*2pt,-#2*2pt) rectangle ++(#2*4pt,#2*4pt);
}

\def\subtree{
  \tikz[scale=0.3]{
    \coordinate (a) at (0,0);
    \coordinate (b) at (-0.4,-1);
    \coordinate (c) at (0.4,-1);
    \draw (b) -- (a) -- (c);
    \foreach \p in {a} {\IntNode[\p][2]}
    \foreach \p in {b,c} {\Anchor[\p][2]}
  }
}

\title{Growing Binary Trees}

\author{Olivier Bodini
\institute{Sorbonne Paris Nord, CNRS, LAGA.}
\email{Olivier.Bodini@univ-paris13.fr}
\and
Antoine Genitrini \qquad Khaydar Nurligareev
\institute{Sorbonne Universit\'e, CNRS, LIP6, F-75005 Paris, France.}
\email{Antoine.Genitrini@lip6.fr \qquad Khaydar.Nurligareev@lip6.fr}
}

\def\titlerunning{Growing Binary Trees}
\def\authorrunning{O. Bodini, A. Genitrini \& K. Nurligareev}

\begin{document}
\maketitle

\begin{abstract}
 This paper introduces a new combinatorial framework for modeling the growth of binary trees through a discrete evolution process that incorporates a growing rule and an extinction rule. Building upon the theory of increasingly labeled structures and the analysis of polynomial iterates, we extend previous models of increasing trees with label repetitions by allowing growth branches to terminate. This mechanism enables a direct connection between dynamic evolutionary processes and classical unlabeled binary trees. We provide a combinatorial outlook for this model, linking our new approach to essential but traditionally complex parameters such as tree height, the maximum number of leaves at the deepest level (for a given tree size), and the overall tree profile. Our approach reveals structural links with Mandelbrot polynomials and coding theory. Furthermore, we leverage these structural insights to develop an efficient, iterative uniform random sampler for binary trees with a prescribed profile, achieving optimal complexity in both time and space and in random bit consumption.
\end{abstract}


\section{Introduction}

The combinatorial modeling of tree evolution serves as a fundamental framework for understanding the growth of discrete structures, bridging the gap between local recursive rules and global structural properties. Historically, a significant portion of the literature has been dedicated to labeled increasing trees, where nodes are integrated into a hierarchy according to specific temporal constraints. Early studies, such as the seminal work of Meir and Moon on recursive trees~\cite{MM78}, focused on the typical shape of structures generated by sequential addition. The analysis of functional equations for trees was formalized within the framework of species theory by Bergeron
\emph{et al.}~\cite{BLL88} and the references therein. Later the combinatorics of increasing trees has been extensively studied in the paper~\cite{BFS92}. Those structures comprise for example recursive trees, plane-oriented recursive trees, or binary increasing trees and have been studied through the lens of symbolic functional equations and differential equations. More recently, this research has been extended to include more complex labeling rules and structural synchronizations, notably through the study of increasing Schröder trees and strict monotonic trees~\cite{BGN19,BGMN22}. These models are particularly relevant in phylogenetics, where node labels encode the chronology of evolutionary branching, and exhibit deep connections to random graphs, such as the one-to-one correspondence between certain increasing Schröder trees and classical labeled graphs~\cite{BGN22}.

The mathematical genesis of these evolutionary models lies in the study of recursive processes where each new node is attached to an existing one according to a predefined rule. A seminal contribution to this area is the work of Flajolet and Odlyzko on iterates of polynomials~\cite{FO84} itself extending the paper~\cite{AS73}. Their analysis provided a robust framework for understanding the height and size of trees generated by iterative growth. This approach was then significantly extended in our previous work, On the number of increasing trees with label repetitions~\cite{BGGW20}, which explored discrete growth processes where multiple nodes can share the same label. These models, which involve synchronized branching events, typically lead to purely formal power series where the underlying increasing labeling is intrinsically tied to the structural growth.

In these previous frameworks, the growth process was essentially monotonic: the structures could only expand, and every leaf remained a potential site for further development. The present paper introduces a paradigm shift by adding an extinction (or death) rule to the evolution process. By allowing branches to terminate, we reconnect these dynamic growth models with classical unlabeled structures, specifically unlabeled binary trees. In this perspective, nodes are categorized into three types: internal nodes, active leaves (anchors), and dead leaves. This distinction allows us to consider essential but traditionally difficult-to-access parameters from a new perspective. By leveraging some symbolic substitution and the iterative dynamics of the resulting polynomials, we provide a combinatorial interpretation of the tree height, the maximum number of the deepest leaves (for a fixed tree size), and the tree profile (the number of leaves at each level). Finally, we leverage these structural insights to design a uniform random sampler for binary tree respecting a given profile, that is optimal in terms of time and space complexity, and random bits consumption.

The paper is organized as follows. In Section~\ref{sec:growth_model}, we formally define the growth process under consideration and describe the specific tree families and parameters that arise from this framework. Section~\ref{sec:a_n} focuses on the ``bushy'' trees (those with a maximum number of leaves at the maximum depth for a given tree size), analyzing their behavior as the overall tree size increases. In this context, we establish structural links to meta-Fibonacci sequences and coding theory.
In Section~\ref{sec:S_h}, we provide a comprehensive outlook on the growth dynamics of binary trees with respect to their height. Finally, leveraging our study of leaf distribution, Section~\ref{sec:sampling} explores the algorithmic implications of this model, specifically focusing on the uniform random sampling of binary trees satisfying a prescribed profile.

\section{Combinatorial modeling of a growth process}
\label{sec:growth_model}

\subsection{Growth process}

In our research, we focus on the specific family of binary trees whose nodes are of the following three types: \emph{internal nodes} ($\bullet$), active leaves or \emph{anchors} ($\circ$), and dead leaves or simply \emph{leaves} ({\tiny$\square$}).
Each tree is generated by the \emph{growth process} proceeding as follows.
\begin{enumerate}
  \item
	In the beginning (at the moment $t=0$), the tree is reduced to an anchor ($\circ$).
  \item
    At every moment $t\in\mathbb{Z}_{>0}$, we replace each anchor with a (dead) leaf ({\tiny$\square$}) or a subtree consisting of an internal node with two anchors attached as children 
    (\tikz[scale=0.3]{
      \coordinate (a) at (0,0);
      \coordinate (b) at (-0.4,-1);
      \coordinate (c) at (0.4,-1);
      \draw (b) -- (a) -- (c);
      \foreach \p in {a} {\IntNode[\p][2]}
      \foreach \p in {b,c} {\Anchor[\p][2]}
    }).
\end{enumerate}
We refer to a tree obtained after several steps of the growth process as a \emph{growing binary tree}.
Note, that the internal nodes always have two children in our model.
Such trees are sometimes called \emph{locally complete binary trees}.

From the construction, it is seen that the anchors appear exclusively on the last level.
An example of a~growing binary tree is shown in Figure~\ref{fig:tree-example}~a) at moment $t=5$.

 \begin{figure}[ht!]
  \centering
  \begin{tikzpicture}[scale = 0.6, line width=.5pt]
   \def\scl{1}
   \begin{scope}
    \coordinate (a1) at (0,0);
    \coordinate (b1) at (-1,-1);
    \coordinate (b2) at (1,-1);
    \coordinate (c1) at (-1.5,-2);
    \coordinate (c2) at (-0.5,-2);
    \coordinate (c3) at (0.5,-2);
    \coordinate (c4) at (1.5,-2);
    \coordinate (d1) at (-2.25,-3);
    \coordinate (d2) at (-0.75,-3);
    \coordinate (d3) at (0.75,-3);
    \coordinate (d4) at (2.25,-3);
    \coordinate (e1) at (-2.75,-4);
    \coordinate (e2) at (-1.75,-4);
    \coordinate (e3) at (-1.25,-4);
    \coordinate (e4) at (-0.25,-4);
    \coordinate (e5) at (0.25,-4);
    \coordinate (e6) at (1.25,-4);
    \coordinate (f1) at (-3,-5);
    \coordinate (f2) at (-2.5,-5);
    \coordinate (f3) at (-1.5,-5);
    \coordinate (f4) at (-1,-5);
    \coordinate (f5) at (0,-5);
    \coordinate (f6) at (0.5,-5);
    \coordinate (f7) at (1,-5);
    \coordinate (f8) at (1.5,-5);
    \draw (a1) -- (b1);
    \draw (a1) -- (b2);
    \draw (c1) -- (b1);
    \draw (c2) -- (b1);
    \draw (c3) -- (b2);
    \draw (c4) -- (b2);
    \draw (c1) -- (d1);
    \draw (c1) -- (d2);
    \draw (c4) -- (d3);
    \draw (c4) -- (d4);
    \draw (e1) -- (d1);
    \draw (e2) -- (d1);
    \draw (e3) -- (d2);
    \draw (e4) -- (d2);
    \draw (e5) -- (d3);
    \draw (e6) -- (d3);
    \draw (e1) -- (f1);
    \draw (e1) -- (f2);
    \draw (e3) -- (f3);
    \draw (e3) -- (f4);
    \draw (e5) -- (f5);
    \draw (e5) -- (f6);
    \draw (e6) -- (f7);
    \draw (e6) -- (f8);
    \foreach \p in {f1,f2,f3,f4,f5,f6,f7,f8} {\Anchor[\p][\scl]}
    \foreach \p in {a1,b1,b2,c1,c4,d1,d2,d3,e1,e3,e5,e6} {\IntNode[\p][\scl]}
    \foreach \p in {c2,c3,d4,e2,e4} {\Leaf[\p][\scl]}
    \draw (-3,-0.5) node {$a)$};
   \end{scope}
   \begin{scope}[xshift = 10cm]
    \coordinate (a1) at (-1,0);
    \coordinate (b1) at (-1,-1);
    \coordinate (c1) at (-1.5,-2);
    \coordinate (c2) at (-0.5,-2);
    \coordinate (d1) at (-2.25,-3);
    \coordinate (d2) at (-0.75,-3);
    \coordinate (e1) at (-2.75,-4);
    \coordinate (e2) at (-1.75,-4);
    \coordinate (e3) at (-1.25,-4);
    \coordinate (e4) at (-0.25,-4);
    \coordinate (f1) at (-3,-5);
    \coordinate (f2) at (-2.5,-5);
    \coordinate (f3) at (-1.5,-5);
    \coordinate (f4) at (-1,-5);
    \draw (a1) -- (b1);
    \draw (c1) -- (b1);
    \draw (c2) -- (b1);
    \draw (c1) -- (d1);
    \draw (c1) -- (d2);
    \draw (e1) -- (d1);
    \draw (e2) -- (d1);
    \draw (e3) -- (d2);
    \draw (e4) -- (d2);
    \draw (e1) -- (f1);
    \draw (e1) -- (f2);
    \draw (e3) -- (f3);
    \draw (e3) -- (f4);
    \foreach \p in {f1,f2,f3,f4} {\Anchor[\p][\scl]}
    \foreach \p in {a1,b1,c1,d1,d2,e1,e3} {\IntNode[\p][\scl]}
    \foreach \p in {c2,e2,e4} {\Leaf[\p][\scl]}
    \draw (-3,-0.5) node {$b)$};
   \end{scope}
  \end{tikzpicture}
  \caption{a) A growing binary tree of height $h=5$ with $m=8$ anchors. b) A growing binary tree with an additional internal node attached to its root.}
  \label{fig:tree-example}
 \end{figure}
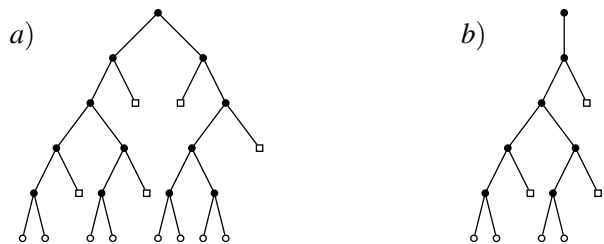

\subsection{The set of growing binary trees}

Let us denote by $t_{n,m}$ the total number of growing binary trees with $n$ internal nodes and $m$ anchors.
From the description of the growth process, it is clear that the number of anchors of a tree is even with the exception of the initial tree.
Thus, $t_{n,2k-1}=0$ for all positive integers $n$ and $k$.
The first values of $t_{n,2k}$ are shown in Table~\ref{tab:t_n,2k}.

\begin{table}[ht!]
 \centering
 \scalebox{0.8}{
  $\begin{array}{c|cccccccccccccc}
   n & 1 & 2 & 3 & 4 & 5 & 6 & 7 & 8 & 9 & 10 & 11 & 12 & 13 & 14 \\
   \hline
  t_{n,2} & 1 & 2 & 4 & 12 & 32 & 104 & 328 & 1\,080 & 3\,648 & 12\,544 & 43\,600 & 153\,504 & 546\,272 & 1\,960\,368 \\ 
  t_{n,4} & 0 & 0 & 1 & 2 & 10 & 24 & 92 & 308 & 1\,028 &  3\,584 & 12\,736 & 45\,160 & 161\,152 & 581\,632 \\
  t_{n,6} & 0 & 0 & 0 & 0 & 0 & 4 & 8 & 40 & 176 & 584 & 2\,144 & 8\,192 & 30\,720 & 112\,496 \\
  t_{n,8} & 0 & 0 & 0 & 0 & 0 & 0 & 1 & 2 & 10 & 84 & 282 & 1\,048 & 4\,368 & 18\,224 \\
  t_{n,10} & 0 & 0 & 0 & 0 & 0 & 0 & 0 & 0 & 0 & 0 & 24 & 104 & 352 & 1\,616 \\
  t_{n,12} & 0 & 0 & 0 & 0 & 0 & 0 & 0 & 0 & 0 & 0 & 0 & 4 & 36 & 96 \\
  t_{n,14} & 0 & 0 & 0 & 0 & 0 & 0 & 0 & 0 & 0 & 0 & 0 & 0 & 0 & 8 \\
  \end{array}$
 }
 \caption{\label{tab:t_n,2k} The first numbers $t_{n,2k}$ of growing binary trees with $n$ internal nodes and $2k$ anchors.}
\end{table}

Note that there is no need to additionally count the number of (dead) leaves $\ell$, since it is completely determined by two other parameters:
if $m$ and $n$ are, respectively, the numbers of anchors and internal nodes in a growing binary tree, then $\ell = n - m + 1$.
Indeed, this relation holds for the initial tree, and the value $n-m-\ell$ is invariant under replacements of the growth process.

Let us further denote by
  \[
    T(x,z)
     :=
    x 
     +
    \sum\limits_{n=0}^{\infty}
     \sum\limits_{k=1}^{\infty} t_{n,2k}x^{2k}z^n
  \]
the generating function of \emph{active} growing binary trees, that is, trees with at least one anchor.
Here, the marking variables $x$ and $z$ count the anchors and internal nodes, respectively.
The series $T(x,z)$ satisfies the following relation:
\begin{equation}\label{eq:initial_gf_relation}
  T(x,z) = x + T(1+zx^2, z) - T(1,z)
  \, .  
\end{equation}
Indeed, replacing an anchor with a leaf corresponds symbolically to the substitution $x\mapsto1$, while a~replacement with a subtree\subtree is represented by the substitution $x\mapsto zx^2$.
Note that we have to deduce the term $T(1,z)$ in order to eliminate trees without anchors.
By substituting $x$ with the generating function $C(z)$ of Catalan numbers and taking into account that $C(z)=1+zC(z)^2$, we conclude that
\[
  T(1,z) = C(z)
  \, , \qquad\mbox{where}\quad
  C(z) = \dfrac{1-\sqrt{1-4z}}{2z}
  \, .  
\]
Combinatorially, this fact confirms that the growing binary trees without anchors are indeed all binary trees counted by the series $C(z)$.
In particular, the column sums in Table~\ref{tab:t_n,2k} are Catalan numbers:
\begin{equation*}
  \sum\limits_{m=1}^{n+1} t_{n,m} = C_n
  \, .
\end{equation*}

Another immediate consequence of Relation~\eqref{eq:initial_gf_relation} is the following recurrent formula for $t_{n,2k}$:
\begin{equation}\label{eq:initial_recurrence}
  t_{1,2} = 1
  \, , \qquad\quad
  t_{n,2k} = \sum\limits_{m=1}^{\infty} \binom{2m}{k} t_{n-k,2m}
  \quad\,\, \mbox{for } n>k\ge1
  \, .
\end{equation}
Relation~\eqref{eq:initial_recurrence} also has a combinatorial explanation:
to get a tree with $n$ internal nodes and $2k$ anchors, we have to replace $k$ anchors of the tree obtained in the previous step of the growth process with an internal node with two children; the other anchors become leaves.
The binomial coefficient $\binom{2m}{k}$ then represents the number of choices of $k$ anchors to be replaced.

\subsection{Cumulative anchor counting}

To enumerate the total number of leaves in active growing binary trees, we use the series $\tilde{T}(1,z)$, where
\[
  \tilde{T}(x,z) := 
  \dfrac{\partial}{\partial x}T(x,z)
   =
  1 
   +
  \sum\limits_{n=0}^{\infty}
   \sum\limits_{k=1}^{\infty} 2kt_{n,2k}x^{2k-1}z^n
  \, .
\]
By differentiating Relation~\eqref{eq:initial_gf_relation} with respect to $x$, we obtain the following relation for $\tilde{T}(x,z)$:
\begin{equation}\label{eq:tilde(T)_relation}
  \tilde{T}(x,z) = 1 + 2xz\tilde{T}(1+zx^2, z)
  \, .  
\end{equation}
This relation allows us to express the series $\tilde{T}(x,z)$ in terms of polynomials $p_h(x,z)$ that count all growing binary trees that can be obtained after the first $h$ steps of the growing process.
In fact, the polynomials $p_h(x,z)$ are determined by the following recurrence relation:
\begin{equation}\label{eq:p_h(x,z)}
  p_0(x,z) = x
  \, , \qquad\quad
  p_{h+1}(x,z) = p_h(1+zx^2,z)\quad\,\, \mbox{for } h\ge0
  \, .  
\end{equation}
Therefore, taking into account that $1+zx^2=p_1(x,z)$, we can iterate Relation~\eqref{eq:tilde(T)_relation} according to~\eqref{eq:p_h(x,z)}.
By passing to the limit, this gives us the following expression:
\[
  \tilde{T}(x,z) = 1 + 
  \sum\limits_{i=1}^{\infty} (2z)^i
   \prod\limits_{j=0}^{i-1}p_j(x,z)
  \, .  
\]
Note that the polynomials $p_h(x,z)$ are related to the so-called (shifted) \emph{Mandelbrot polynomials}~\cite{CCC21}:
\begin{equation*}
  M_h(z) = z\, q_h(1,z)
  \, .
\end{equation*}
From a combinatorial point of view, Mandelbrot polynomials count growing binary trees with an additional internal node attached to the root; see Figure~\ref{fig:tree-example}~b).
By substituting $x=1$ into the relation for $\tilde{T}(x,z)$, we obtain an expression for $\tilde{T}(1,z)$ in terms of Mandelbrot polynomials:
\[
  \tilde{T}(1,z) = 1 + 
  \sum\limits_{i=1}^{\infty} 2^i
   \prod\limits_{j=0}^{i-1}M_j(z)
  \, .  
\]
\begin{remark}
The formal link to Mandelbrot polynomials highlights a phase transition in the tree growth. The discrete dynamic of the iterates $p_h(1,z)$ has fixed points satisfying $x = 1 + zx^2$, which results in a~critical bifurcation at $z_c = 1/4$. For $|z| < 1/4$, the process contracts towards the Catalan generating function $C(z)$ (almost-sure extinction), whereas for $z > 1/4$, it diverges (explosive growth). Consequently, the analyticity domain of the process is intrinsically bounded by the main cardioid of the Mandelbrot set.
\end{remark}

\section{Maximal number of anchors for a given tree size}
\label{sec:a_n}

Let us turn our focus to the columns of Table~\ref{tab:t_n,2k}.
We can see that in every column the number of nonzero elements is finite.
For the $n$th column, we denote this number $a_n$.
In other words,
\begin{equation*}
  a_n = \max\{k \colon t_{n,2k}>0\}
  \, .
\end{equation*}
The first several values of the sequence $(a_n)_{n\ge1}$ are
\[
 (a_n)_{n\ge 1} = 1, 1, 2, 2, 2, 3, 4, 4, 4, 4, 5, 6, 6, 7, 8, 8, 8, 8, 8, 9, 10, 10, 11, 12, 12, 12, 13, \ldots
\]

The purpose of this section is to provide a description of the sequence $(a_n)_{n\ge1}$ and to show that this sequence is actually \href{https://oeis.org/A006949}{A006949} from~\cite{oeis}.
First, we show that $(a_n)_{n\ge1}$ can be defined independently of growing binary trees.

\begin{lemma}\label{lemma:a_n-recurrence}
  The sequence $(a_n)_{n\ge1}$ satisfies the following recurrent relation:
  \begin{equation*}
    a_1 = 1
    \, , \qquad\quad
    a_n = \max\{k\colon k \le 2a_{n-k}\}
    \quad\,\, \mbox{for } n>1
    \, .
  \end{equation*}
\end{lemma}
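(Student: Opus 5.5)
The plan is to work directly with recurrence~\eqref{eq:initial_recurrence}, which says that $t_{n,2k}>0$ exactly when some $m$ satisfies $t_{n-k,2m}>0$ and $\binom{2m}{k}>0$, i.e.\ $k\le 2m$. So everything reduces to describing the support $S_n=\{k\colon t_{n,2k}>0\}$ of each column.

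The first key step is a monotonicity claim: for every $n\ge1$ we have $S_n=\{1,2,\dots,a_n\}$, so there are no gaps. Granting this, the recurrence gives, for $n>1$, that $t_{n,2k}>0$ iff $k<n$ and there is $m\le a_{n-k}$ with $k\le 2m$. Taking $m=a_{n-k}$, this holds iff $k\le 2a_{n-k}$. Hence
\[
  a_n=\max\{k\colon 1\le k<n,\ k\le 2a_{n-k}\},
\]
which is the claimed relation. Here we interpret $a_j$ only for $j\ge1$, so $k<n$ is automatic. The case $k=n$ would need the initial anchor tree with $n-k=0$, and that is excluded because $t_{n,2k}$ with $n>k$ is governed by~\eqref{eq:initial_recurrence}. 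For $k=1$, $n=1$ we simply have the base value $t_{1,2}=1$. The boundary convention needs a short check: for $n=2$ the only candidate is $k=1\le 2a_1$, giving $a_2=1$, which matches the table.

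The main obstacle is the no-gap claim, which I would prove by strong induction on $n$. The base $S_1=\{1\}$ is immediate. Suppose $k\in S_n$ with $k\ge2$; we must show $k-1\in S_n$. From $k\in S_n$ we get some $m\le a_{n-k}$ with $k\le2m$. The difficulty is that $k-1$ requires looking at the different column $n-k+1$, so we need $a_{n-k+1}\ge \lceil (k-1)/2\rceil$. I would obtain this from weak monotonicity of $a$, namely $a_{j+1}\ge a_j$, proved simultaneously in the same induction.

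For the monotonicity step, combinatorially: take a tree with $j$ internal nodes and $2a_j$ anchors. Replace one leaf-producing choice in its last growth step by an internal node, which trades a leaf for two more internal... The cleaner route is algebraic. If $k$ is attained for $n$ via $m=a_{n-k}$, then for $n+1$ the same $k$ uses column $n+1-k$, and $a_{n+1-k}\ge a_{n-k}$ by induction, so $k\le 2a_{n+1-k}$ and $a_{n+1}\ge a_n$. This covers $k<n$; if $n-k$ would be $0$, it is handled by the small cases directly. With monotonicity in hand, $a_{n-k+1}\ge a_{n-k}\ge m\ge k/2\ge (k-1)/2$, so $k-1\in S_n$. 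The induction ordering needs care, since both claims for index $n$ only use indices $<n$. I would verify this ordering explicitly, and that is where I expect the bookkeeping to be delicate.
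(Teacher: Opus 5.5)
Your proof is correct and follows essentially the paper's route: read \eqref{eq:initial_recurrence} as a positivity criterion ($t_{n,2k}>0$ iff some $m$ with $t_{n-k,2m}>0$ satisfies $k\le 2m$), reduce it to $k\le 2a_{n-k}$, and handle the boundary $k<n$ explicitly, which the paper leaves implicit. The no-gap/monotonicity induction you call the main obstacle is unnecessary: for the ``if'' direction, $m=a_{n-k}$ already has $t_{n-k,2m}>0$ by the definition of $a_{n-k}$, and for the ``only if'' direction every $m$ with $t_{n-k,2m}>0$ satisfies $m\le a_{n-k}$, so the intermediate values of the support never matter.
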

\begin{proof}
  Using the sequence $(a_n)_{n\ge1}$, Relation~\eqref{eq:initial_recurrence} can be rewritten as
  \[
    t_{n,2k} = \sum\limits_{m=\lceil k/2\rceil}^{a_{n-k}} \binom{2m}{k} t_{n-k,2m}
    \, .
  \]
  Hence, the condition $t_{n,2k} \neq 0$ means that there exists a positive integer $m$ such that $k \le 2m \le 2a_{n-k}$,
  which is equivalent to the condition $k \le 2a_{n-k}$.
\end{proof}

From Lemma~\ref{lemma:a_n-recurrence}, it follows by induction that the sequence $(a_n)_{n\ge1}$ is increasing in the sense that, for any $n\in\mathbb{Z}_{>0}$, we have $a_{n+1} = a_n$ or $a_{n+1} = a_n+1$.
This fact, coupled with the initial condition $a_1=1$, means that the behavior of the sequence $(a_n)_{n\ge1}$ is completely determined by the number of repetitions:
\begin{equation*}
  a_n = \max\left\{k \colon\, \sum\limits_{i=1}^{k} b_k \ge n\right\}
  \, ,
\end{equation*}
where $b_n$ is the number of elements in $(a_n)_{n\ge1}$ that are equal to $n$:
\begin{equation*}
  b_n = \#\{k \colon a_k=n\}
  \, .
\end{equation*}
The first several values of the sequence $(b_n)_{n\ge1}$ are
\[
 (b_n)_{n\ge 1} = 2, 3, 1, 4, 1, 2, 1, 5, 1, 2, 1, 3, 1, 2, 1, 6, 1, 2, 1, 3, 1, 2, 1, 4, 1, 2, 1, \ldots
\]

\begin{proposition}\label{prop:b_n-formula}
  The sequence $(b_n)_{n\ge1}$ satisfies the following relation:
  \begin{equation*}
    b_n
     = 
    \left\{\begin{array}{ll}
      p+2\quad & \mbox{if } n=2^p, \\
      p+1\quad & \mbox{if } n=2^pa,\,\, a\mbox{ is odd},\,\, a>1\, . \\
    \end{array}\right.
  \end{equation*}
  In particular, we have
  $b_{2n}=b_n+1$ for even indices,
  $b_{2n+1}=1$ for odd indices greater than $1$,
  and $b_1=2$.
\end{proposition}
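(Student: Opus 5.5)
The plan is to work with the first index at which the sequence reaches a given value, rather than with $b_n$ directly. Set
\[
  F_k := \min\{n\colon a_n \ge k\}
  \, .
\]
Since $a_{n+1}\in\{a_n,a_n+1\}$ and $a_1=1$, we have $F_k=\min\{n\colon a_n=k\}$ and $b_k = F_{k+1}-F_k$. The proposition then reduces to a recurrence for $F_k$ that I extract from Lemma~\ref{lemma:a_n-recurrence}.

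\textbf{Step 1.} I first show that, for $n>1$ and $k\ge1$ with $k<n$,
\[
  a_n \ge k \iff k \le 2a_{n-k}
  \, .
\]
The direction ``$\Leftarrow$'' is immediate from the definition of $a_n$ as a maximum. For ``$\Rightarrow$'', let $k'=a_n\ge k$, so that $k'\le 2a_{n-k'}$. Then $k\le k'\le 2a_{n-k'}\le 2a_{n-k}$, where the last inequality holds because $(a_n)$ is nondecreasing (noted after Lemma~\ref{lemma:a_n-recurrence}) and $n-k\ge n-k'$. The condition $k\le 2a_{n-k}$ is equivalent to $a_{n-k}\ge\lceil k/2\rceil$, that is, to $n-k\ge F_{\lceil k/2\rceil}$.

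\textbf{Step 2.} Taking the minimal such $n$ gives
\[
  F_k = k + F_{\lceil k/2\rceil}
  \quad\mbox{for } k\ge2
  \, , \qquad F_1=1
  \, .
\]
I need to check the edge constraint $k<n$. It holds because $F_{\lceil k/2\rceil}\ge1$, and because $F$ is increasing in the index, so the minimum is attained exactly at $n=k+F_{\lceil k/2\rceil}$. This gives $F_2=3$ and hence $b_1=F_2-F_1=2$.

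\textbf{Step 3.} For $k\ge1$, subtracting consecutive values gives
\[
  b_k = F_{k+1}-F_k = 1 + F_{\lceil (k+1)/2\rceil}-F_{\lceil k/2\rceil}
  \, .
\]
I evaluate this in two cases.
\begin{itemize}
  \item If $k=2n$ with $n\ge1$, the ceilings are $n+1$ and $n$, so $b_{2n}=1+b_n$.
  \item If $k=2n+1$ with $n\ge1$, both ceilings equal $n+1$, so $b_{2n+1}=1$.
\end{itemize}
The case $k=1$ is excluded from the second case, because $F_1$ is given by the initial condition and not by the recurrence. The closed form then follows by induction on $p$. Odd $a>1$ gives $b_a=1=0+1$, and each doubling adds $1$, so $b_{2^pa}=p+1$. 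Likewise $b_1=2$ yields $b_{2^p}=p+2$.

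The main obstacle is Step 1, namely justifying that the maximum in Lemma~\ref{lemma:a_n-recurrence} can be replaced by a threshold condition. This is exactly where monotonicity of $(a_n)$ is needed, together with care about the boundary indices $n-k\ge1$. The remaining steps are bookkeeping with ceilings.
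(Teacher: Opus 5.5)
The paper gives no proof of this proposition. It is stated without argument and used only to identify $(a_n)_{n\ge1}$ with OEIS A006949; the recurrence, generating function and asymptotics of Corollary~\ref{cor:a_n_properties} are then imported from the meta-Fibonacci literature. So there is no paper proof to compare against. What you give is a self-contained derivation from Lemma~\ref{lemma:a_n-recurrence}, and it is correct.

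The key move is replacing the maximum in the lemma by the threshold condition $n-k\ge F_{\lceil k/2\rceil}$, using monotonicity. The paper obtains monotonicity directly from the lemma, so nothing is circular. This gives $F_k=k+F_{\lceil k/2\rceil}$, which matches the data: $F_2,\dots,F_{13}=3,6,7,11,12,14,15,20,21,23,24,27$.

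Three small repairs are needed:
\begin{itemize}
\item In Step~3, the displayed formula for $b_k$ holds only for $k\ge2$; at $k=1$ it would give $b_1=1$. State it for $k\ge2$, which is what your case split actually uses.
\item In Step~2, the reason no $n\le k$ satisfies $a_n\ge k$ is that $a_1=1<k$ and $a_n\le n-1$ for $n\ge2$, because the maximum in the lemma ranges over $k\le n-1$. The monotonicity of $F$ plays no role there.
\item Record that every $F_k$ is finite, i.e.\ that $(a_n)$ is unbounded; this is needed for $b_k=F_{k+1}-F_k$. It follows by strong induction on $k$ from the ``$\Leftarrow$'' direction of Step~1, since $\lceil k/2\rceil<k$ for $k\ge2$.
\end{itemize}

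Your route also buys something the paper's citation does not. Unrolling the recurrence gives $F_k=\sum_{j=0}^{\lceil\log_2 k\rceil}\lceil k/2^j\rceil=2k+O(\log k)$, for instance $F_{2^p}=2^{p+1}-1$. This yields $a_n/n\to1/2$ directly, without appealing to the literature.
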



Proposition~\ref{prop:b_n-formula} allows us to claim that $(a_n)_{n\ge1}$, with an additional value $a_0=1$, coincides with the entry \href{https://oeis.org/A006949}{\texttt{A006949}} from \textsc{oeis}~\cite{oeis} and is known as a \emph{meta-Fibonacci sequence} (for the sequence $(b_n)_{n\ge1}$, see the entries \href{https://oeis.org/A135560}{\texttt{A135560}} and \href{https://oeis.org/A241235}{\texttt{A241235}}).
In particular, due to~\cite{T92} and~\cite{RD09}, we obtain the following properties of this sequence.

\begin{corollary}\label{cor:a_n_properties}
  The sequence $(a_n)_{n\ge1}$ satisfies the following recurrent relation:
  \begin{equation*}
    a_0 = a_1 = a_2 = 1
    \, , \qquad\quad
    a_n = a_{n-1-a_{n-1}} + a_{n-2-a_{n-2}}
    \quad\,\, \mbox{for } n>2
    \, .
  \end{equation*}
  Its generating function and asymptotic behavior satisfy, respectively,
  \begin{equation*}
    \sum\limits_{n=0}^{\infty} a_n z^n
     = 
    z\sum\limits_{n=0}^{\infty}
     \prod\limits_{i=1}^{n} (z + z^{2^i})
    \qquad\mbox{and}\qquad
    \lim\limits_{n\to\infty}\dfrac{a_n}{n} = \dfrac{1}{2}
    \, .
  \end{equation*}
\end{corollary}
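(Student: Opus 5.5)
The plan is to derive everything from Proposition~\ref{prop:b_n-formula}. The strategy is to show that $(a_n)_{n\ge0}$, with $a_0=1$ adjoined, is the same sequence as the meta-Fibonacci sequence $(c_n)_{n\ge0}$ defined by $c_0=c_1=c_2=1$ and $c_n=c_{n-1-c_{n-1}}+c_{n-2-c_{n-2}}$. The recurrence is then immediate, and the generating function and the asymptotics can be obtained either from the literature on $(c_n)$ or directly from the frequencies $b_n$.

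\textbf{Step 1 (identification).} By Lemma~\ref{lemma:a_n-recurrence} and the remark following it, $a_{n+1}-a_n\in\{0,1\}$ and $a_1=1$. Hence $(a_n)$ is determined by its frequency sequence $(b_n)$, which is given explicitly by Proposition~\ref{prop:b_n-formula}. For $(c_n)$, the results of~\cite{T92,RD09} state that $(c_n)$ is also nondecreasing with unit steps. They also give its frequency sequence: the value $k$ occurs $\nu_2(k)+1$ times for $k$ not a power of $2$, and $\nu_2(k)+2$ times for $k=2^p$, except that the value $1$ gains one extra occurrence from the index $0$. This is exactly $(b_n)$ once the adjoined term $a_0=1$ is accounted for: the value $1$ then appears three times, at indices $0,1,2$. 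Two unit-step nondecreasing sequences with the same starting value and the same frequencies coincide, so $a_n=c_n$ for all $n\ge0$, and the recurrence follows. The main obstacle is matching the conventions of~\cite{RD09} exactly, including the index shift and the multiplicity of the value $1$. If the citation is not directly in this form, I would instead prove by strong induction on $n$ that the sequence built from $(b_n)$ satisfies the meta-Fibonacci recurrence. To do so, I would split according to whether $n$ lies inside a block of repeated values or at the start of a new value, and use $b_{2k}=b_k+1$ and $b_{2k+1}=1$ to compare the two terms $a_{n-1-a_{n-1}}$ and $a_{n-2-a_{n-2}}$.

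\textbf{Step 2 (generating function).} Let $N_k=\sum_{i<k}b_i$, adjusted for $a_0$, be the first index at which $a_n=k$. Then
\[
\sum_{n\ge0}a_nz^n=\frac{1}{1-z}\sum_{k\ge1}z^{N_k}.
\]
Expanding $\prod_{i=1}^{n}(z+z^{2^i})$ amounts to choosing, for each $i\le n$, either $1$ or $2^i-1$ as an extra exponent. This binary-choice structure reflects the $2$-adic formula for $b_k$. I would check the identity by comparing coefficients, or simply invoke~\cite{RD09}, where this product formula is established for $(c_n)$.

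\textbf{Step 3 (limit).} This part can be done directly. From Proposition~\ref{prop:b_n-formula},
\[
\sum_{i=1}^{k}b_i=\sum_{i=1}^{k}\bigl(\nu_2(i)+1\bigr)+\lfloor\log_2 k\rfloor+1=2k-s_2(k)+O(\log k),
\]
using Legendre's formula $\sum_{i\le k}\nu_2(i)=k-s_2(k)$, where $s_2(k)$ is the binary digit sum of $k$. Since $s_2(k)=O(\log k)$, this gives $n=2a_n+O(\log n)$. Therefore $a_n/n\to1/2$.
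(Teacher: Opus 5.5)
Your Steps~1 and~2 follow the paper's own argument. The paper likewise only observes that Proposition~\ref{prop:b_n-formula}, with $a_0=1$ adjoined, reproduces the repetition counts of \texttt{A006949}, and then imports the recurrence, the generating function and the limit from \cite{T92,RD09}. You genuinely differ in Step~3: instead of citing, you derive $a_n/n\to1/2$ directly from the explicit $b_n$. You use $B_k:=\sum_{i\le k}b_i=2k-s_2(k)+\lfloor\log_2 k\rfloor+1$ (Legendre) together with $B_{a_n-1}<n\le B_{a_n}$. That argument is correct and makes the asymptotic statement independent of the literature.

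There is one caveat on Step~2, which you inherit from the statement. With $a_0=1$ and the sum starting at $n=0$, the identity fails at $z^0$. The right-hand side is divisible by $z$, whereas your own expression $\frac{1}{1-z}\sum_{k\ge1}z^{N_k}$ (with $N_1=0$) has constant term $1$. What actually holds is $\sum_{n\ge1}a_nz^n=z\sum_{n\ge0}\prod_{i=1}^{n}(z+z^{2^i})$, and the coefficient comparison you propose would have caught this; the paper has the same slip.

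Your Step~3 formula also proves this corrected identity outright, so you need not match conventions with \cite{RD09}:
\begin{itemize}
\item Set $P_n=\prod_{i=1}^n(z+z^{2^i})$. The relation $zP_n=P_{n+1}-z^{2^{n+1}}P_n$ telescopes to $(1-z)\sum_{n\ge0}P_n=1+\sum_{n\ge0}z^{2^{n+1}}P_n$.
\item On the other side, $\sum_{n\ge1}a_nz^n=\frac{z}{1-z}\sum_{j\ge0}z^{B_j}$.
\item For $j=2^n+m$ with $0\le m<2^n$, one has $B_j=2^{n+1}+n+2m-s_2(m)$.
\item Writing $m=\sum_{i\in S}2^{i-1}$ with $S\subseteq\{1,\dots,n\}$ gives $2m-s_2(m)=\sum_{i\in S}(2^i-1)$. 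These are exactly the exponents of $z^{2^{n+1}}P_n$.
\end{itemize}
After this, only the meta-Fibonacci recurrence still needs \cite{T92} (or the induction you sketch).
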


Note that the sequence $(a_n)_{n\ge1}$ also admits another combinatorial interpretation in terms of infinite binary trees~\cite{RD09,D11} and is related to extremal compact codes~\cite{JR06}.

\section{Growing binary trees of fixed height}
\label{sec:S_h}

In this section, we study the behavior of growing binary trees with respect to their heights.

\begin{lemma}\label{lem:parameters_relations}
  For any active growing binary tree, the number $n$ of its internal nodes, the number $2k$ of its anchors, and its height $h$ satisfy
  \begin{equation*}
    h \le n - k + 1 \le 2^{h-1}
    \, .
  \end{equation*}
  Both inequalities are sharp.
\end{lemma}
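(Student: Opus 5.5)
The plan is to read off the three parameters level by level, using the fact noted after the description of the growth process: in an active growing binary tree all anchors lie on the last level. Let the tree have height $h\ge1$; the initial tree, with $h=0$ and a single anchor, is excluded because its anchor count is odd. The key structural claim is:
\begin{center}
\emph{the internal nodes at level $h-1$ are exactly the parents of the anchors, and there are exactly $k$ of them.}
\end{center}
To see this, observe that an internal node at level $h-1$ was created at step $h-1$ and replaced at step $h-1$ by an internal node carrying two anchors. Those anchors sit at level $h$ and, since the tree was observed at moment $h$, they have not yet been replaced. Conversely, every anchor at level $h$ has an internal parent at level $h-1$. Hence the $2k$ anchors are paired off by their $k$ parents at level $h-1$, and $n-k$ counts precisely the internal nodes on levels $0,1,\dots,h-2$.

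For the lower bound, note that the tree has at least one anchor at level $h$. The path from the root to that anchor passes through an internal node on each of the levels $0,\dots,h-2$, so $n-k\ge h-1$, that is, $h\le n-k+1$.

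For the upper bound, level $i$ of a binary tree contains at most $2^i$ nodes. Therefore
\[
n-k\le\sum_{i=0}^{h-2}2^i=2^{h-1}-1,
\]
which gives $n-k+1\le 2^{h-1}$.

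For sharpness, I would exhibit two extremal trees for every $h\ge1$:
\begin{itemize}
\item[(i)] The ``caterpillar'': at each of the steps $1,\dots,h-1$ exactly one anchor is expanded and the other becomes a dead leaf, and at step $h$ the single anchor is expanded. This tree has $n=h$ and $k=1$, so $n-k+1=h$ and the lower bound is attained.
\item[(ii)] The complete tree: every anchor is expanded at every step. This tree has $n=2^h-1$ and $2k=2^h$, so $n-k+1=2^{h-1}$ and the upper bound is attained.
\end{itemize}

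The only delicate point is the structural claim above, namely that anchors occur only at level $h$ and that each internal node at level $h-1$ has two anchor children. Once the time/level correspondence of the growth process is stated explicitly, both inequalities reduce to elementary counting.
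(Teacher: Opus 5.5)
Your proof is correct and complete. The paper states this lemma without proof, so there is no argument of the authors to compare with. Your level-by-level count is the natural one: $n-k$ is exactly the number of internal nodes on levels $0,\dots,h-2$, the ancestors of any anchor give the lower bound, and the bound $2^i$ on level sizes gives the upper bound.

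Your approach also gives slightly more than the lemma. Equality $n-k+1=2^{h-1}$ holds precisely when levels $0,\dots,h-2$ consist entirely of internal nodes. In that case the $k$ internal nodes on level $h-1$ can be any $k$ of the $2^{h-1}$ available positions, with $1\le k\le 2^{h-1}$. This is exactly the description of the right boundary $\Gamma_h$ in Proposition~\ref{prop:Gamma_h_exact_form}, so that proposition follows from your argument.

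There are two cosmetic slips in the time-step bookkeeping, and neither affects the argument:
\begin{itemize}
\item A node at level $h-1$ is created as an anchor at step $h-1$ and is expanded into an internal node at step $h$, not at step $h-1$.
\item In your caterpillar, step $1$ has only one anchor (the root), and it is expanded. At each of the steps $2,\dots,h$, one of the two current anchors is expanded and the other becomes a dead leaf, so there is no ``single anchor'' at step $h$. The tree you actually mean, with one internal node on each level $0,\dots,h-1$ and two anchors on level $h$, does have $n=h$ and $k=1$, so the lower bound is attained as you claim.
\end{itemize}
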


Let us denote by $t_{n,m,h}$ the number of active growing binary trees of height $h$ with $n$ internal nodes and $m$ anchors.
Clearly, apart from the case $h=0$, the number of anchors in a tree of height $h$ is even.
For example, for $h=4$, the values of $t_{n,2k,h}$ are shown in Figure~\ref{fig:h4}.

\begin{figure}[ht!]
  \centering
\begin{tikzpicture}[scale = 0.5, line width=.5pt, >=latex]
 \def\GroundColor{yellow!50!white}
 \def\FrontColor{red!50!white}
 \def\BackColor{blue!50!white}
 \def\FrontBackColor{purple!50!white}
 \def\MiddleColor{green!50!white}
 \def\Block[#1][#2][#3][#4]{
  \begin{scope}[xshift=#1*28.5, yshift=#2*28.5]
   \filldraw[#4] (0,0) rectangle ++(-1,-1);
   \draw (-0.5,-0.5) node {$#3$};
  \end{scope}
 }
 \begin{scope}
  \def\n{15}
  \def\k{8}
  \filldraw[\GroundColor] (0,0) rectangle (\n,\k);
  \draw[->] (\n,0) -- ++(1,0);
  \draw[->] (0,\k) -- ++(0,1);
  \draw (\n+0.5,-0.5) node {$n$};
  \draw (-0.5,\k+0.5) node {$k$};
  \foreach \p in {1,...,\n} \draw (\p-0.5,-0.5) node {$\p$};
  \foreach \q in {0,...,\k} \draw (-0.5,\q-0.5) node {$\q$};
  \foreach \p in {1,...,\n}{
   \foreach \q in {1,...,\k}{
    \draw (\p-0.5,\q-0.5) node {$0$};
   }
  }
  \Block[4][1][8][\FrontColor]
  \Block[5][2][4][\FrontColor]
  \Block[5][1][16][\MiddleColor]
  \Block[6][2][16][\FrontColor]
  \Block[7][3][8][\FrontColor]
  \Block[8][4][2][\FrontColor]
  \Block[6][1][24][\MiddleColor]
  \Block[7][2][36][\MiddleColor]
  \Block[8][3][24][\MiddleColor]
  \Block[9][4][6][\FrontColor]
  \Block[7][1][24][\MiddleColor]
  \Block[8][2][60][\MiddleColor]
  \Block[9][3][80][\MiddleColor]
  \Block[10][4][60][\FrontColor]
  \Block[11][5][24][\FrontColor]
  \Block[12][6][4][\FrontColor]
  \Block[8][1][8][\BackColor]
  \Block[9][2][28][\BackColor]
  \Block[10][3][56][\BackColor]
  \Block[11][4][70][\BackColor]
  \Block[12][5][56][\BackColor]
  \Block[13][6][28][\FrontBackColor]
  \Block[14][7][8][\FrontBackColor]
  \Block[15][8][1][\FrontBackColor]
  \draw (0,0) grid (\n,\k);
 \end{scope}
\end{tikzpicture}
  \caption{Numbers $t_{n,2k,h}$ of growing binary trees of height $h=4$.}
  \label{fig:h4}
\end{figure}

Given a positive integer $h$, we also introduce the domain $S_h$ of nonzero values of $t_{n,2k,h}$:
  \begin{equation}\label{eq:nonzero_domain_S_h}
    S_h = \{(n,k)\colon t_{n,2k,h} \neq 0\}
    \, .
  \end{equation}
From Lemma~\ref{lem:parameters_relations}, it follows that $S_h$ is finite.
The following results describe its behavior. 

\begin{proposition}\label{prop:Gamma_h_exact_form}
 The right boundary $\Gamma_h$ of the nonzero domain $S_h$ (blue cells in Figure~\ref{fig:h4}) satisfies
 \begin{equation}\label{eq:Gamma_h_exact_form}
  \Gamma_h = \{(2^{h-1}-1+i,i)\colon 1\le i \le 2^{h-1}\}
  \, .
 \end{equation}
 In particular, all elements of $\Gamma_h$ belong to the line $n-k = 2^{h-1}-1$.
\end{proposition}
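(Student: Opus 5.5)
We read ``right boundary'' as: for each row $k$ with $(n,k)\in S_h$ for some $n$, the cell with the largest $n$. So we must show two things. First, the rows occupied by $S_h$ are exactly $1\le k\le 2^{h-1}$. Second, in each such row the largest $n$ is $2^{h-1}-1+k$. By the upper bound of Lemma~\ref{lem:parameters_relations}, $n-k+1\le 2^{h-1}$, so every $(n,k)\in S_h$ satisfies $n\le 2^{h-1}-1+k$. Hence it suffices to show two further facts. (i) For every $1\le i\le 2^{h-1}$ there is an active growing binary tree of height $h$ with $n=2^{h-1}-1+i$ internal nodes and $2i$ anchors. (ii) No tree of height $h$ has $k>2^{h-1}$.

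Fact (ii) is immediate. Anchors lie only on the last level $h$, and level $h$ has at most $2^h$ nodes. So $2k\le 2^h$.

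For (i) we use an explicit construction. Run the growth process for the first $h-1$ steps, always replacing every anchor by an internal node with two anchors. This gives the complete binary tree of height $h-1$: it has $2^{h-1}-1$ internal nodes and $2^{h-1}$ anchors on level $h-1$. At step $h$, choose any $i$ of these anchors, with $1\le i\le 2^{h-1}$, and replace each by the subtree \subtree. Replace the remaining $2^{h-1}-i$ anchors by dead leaves. The result has height $h$, $n=2^{h-1}-1+i$ internal nodes and $2i$ anchors, so $(2^{h-1}-1+i,i)\in S_h$.

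We can also count these trees, which matches the blue cells of Figure~\ref{fig:h4}. The tree is forced to be complete up to level $h-1$ by the equality case, so $t_{2^{h-1}-1+i,\,2i,\,h}=\binom{2^{h-1}}{i}$.

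The only delicate point is the upper bound from Lemma~\ref{lem:parameters_relations}, which we assume. If it had to be justified, we would argue as follows. Each internal node at levels $0,\dots,h-2$ is at most one of $2^{h-1}-1$ positions. The internal nodes at level $h-1$ are exactly the parents of the $2k$ anchors, so there are $k$ of them. Hence $n\le 2^{h-1}-1+k$. Equality forces all levels up to $h-2$ to be full of internal nodes.

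Combining the bound with (i) and (ii) gives exactly \eqref{eq:Gamma_h_exact_form}. The points of $\Gamma_h$ lie on the line $n-k=2^{h-1}-1$ by inspection.
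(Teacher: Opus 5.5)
Your proof is correct and is essentially the route the paper's setup points to, though the paper's source as given contains no written proof of this proposition: the sharp upper inequality $n-k+1\le 2^{h-1}$ of Lemma~\ref{lem:parameters_relations} caps each row at $n=2^{h-1}-1+k$. The trees that are complete up to level $h-1$ (there are $\binom{2^{h-1}}{i}$ of them, matching the blue entries of Figure~\ref{fig:h4}) attain this cap for every $1\le i\le 2^{h-1}$, while $2k\le 2^{h}$ and activity ($k\ge 1$) fix the range of rows. Your self-contained justification of the upper bound (at most $2^{h-1}-1$ internal nodes on levels $0,\dots,h-2$, and exactly $k$ on level $h-1$) is also sound.
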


\begin{proposition}\label{prop:Lambda_h_exact_form}
 The upper boundary $\Lambda_h$ of the nonzero domain $S_h$ (red cells in Figure~\ref{fig:h4}) satisfies
 \begin{equation}\label{eq:Lambda_h_exact_form}
  \La_h = \{
   (h,\hat{a}_2),
   (h+1,\hat{a}_3),
   \ldots,
   (2^{h}-2,\hat{a}_{2^h-h}),
   (2^{h}-1,\hat{a}_{2^h-h+1})
  \}
  \, ,
 \end{equation}
 where
 \begin{equation*}
  \hat{a}_n = \max\left\{k \colon\, \sum\limits_{i=1}^{k} \hat{b}_k \ge n\right\},
 \end{equation*}
 and $\hat{b}_n$ is the \emph{ruler function}, that is, the number of times $2n$ can be divided by two.
\end{proposition}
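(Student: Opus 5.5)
The plan is to turn the statement into an optimisation problem on level profiles, solve it greedily, and then read off the ruler function from the increments.

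\emph{Reduction to profiles.} In an active growing binary tree of height $h$, all anchors lie on level $h$ and each comes from an internal node on level $h-1$. So $2k$ anchors means exactly $k$ internal nodes on level $h-1$. Let $c_i$ be the number of internal nodes on level $i$, for $0\le i\le h-1$. Then $c_0=1$, $c_i\ge1$, $c_{i+1}\le 2c_i$, $\sum_i c_i=n$ and $c_{h-1}=k$.

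Conversely, every such integer vector $(c_0,\dots,c_{h-1})$ is realised by some growing binary tree. At each step we choose which $c_{i+1}$ of the $2c_i$ children become internal nodes; the remaining children become dead leaves, and the last level consists of $2c_{h-1}$ anchors. Hence $(n,k)\in S_h$ if and only if such a vector exists. The upper boundary $\La_h$ is therefore given by $K_h(n)$, the maximum of $c_{h-1}$ over admissible vectors with total $n$, for $h\le n\le 2^h-1$.

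\emph{Greedy minimisation.} Fix the bottom value $k$, with $1\le k\le 2^{h-1}$. The constraint $c_{h-1}\le 2^{r}c_{h-1-r}$ forces $c_{h-1-r}\ge\lceil k/2^r\rceil$. The vector $c_{h-1-r}=\lceil k/2^r\rceil$ is itself admissible, since $\lceil k/2^{r}\rceil\le 2\lceil k/2^{r+1}\rceil$ and the top entry equals $1$. So the minimal total compatible with bottom value $k$ is
\[
N(k)=\sum_{r=0}^{h-1}\left\lceil \frac{k}{2^r}\right\rceil .
\]
Next I would show that every total $n$ with $N(k)\le n\le 2^h-1$ is attained by some vector with bottom value at least $k$. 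Starting from the minimal vector, one can add nodes one at a time at the highest level that is not yet saturated without violating $c_{i+1}\le 2c_i$. If the bottom level would have to grow, that only increases $c_{h-1}$. Consequently
\[
K_h(n)=\max\{k\colon N(k)\le n\}.
\]

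\emph{Increments give the ruler function.} For $2\le k\le 2^{h-1}$,
\[
N(k)-N(k-1)=\#\{r\in\{0,\dots,h-1\}\colon 2^r \mid k-1\}=v_2(k-1)+1=\hat b_{k-1},
\]
where $v_2$ is the $2$-adic valuation. No truncation at $r=h-1$ occurs, because $k-1<2^{h-1}$. Together with $N(1)=h$ this gives
\[
N(k)=h+\sum_{i=1}^{k-1}\hat b_i .
\]
Thus, as $n$ runs from $h$ to $2^h-1$, the value $K_h(n)$ starts at $1$ and takes the value $j+1$ exactly $\hat b_{j}$ times, up to the final value $2^{h-1}$ at $n=N(2^{h-1})=2^h-1$. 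This is precisely the "sequence determined by its repetition counts $\hat b$'' encoded by $\hat a$, shifted so that the index $n$ corresponds to $\hat a_{n-h+2}$.

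\emph{Main obstacle.} The delicate point is matching the index shift and the start-up value exactly with the definition of $\hat a_n$ in the statement. That definition should be read as the first index at which the partial sums of $\hat b$ reach $n$ (a "min'' convention, with $\hat b_k$ read as $\hat b_i$ inside the sum). I would fix this by checking small cases against Figure~\ref{fig:h4}, for instance $h=4$ and $n=4,5,6,\dots$. The other step needing care is the filling argument, that every $n$ between $N(k)$ and $2^h-1$ is achievable with bottom value at least $k$. I would prove it by induction on $n$, adding one node per step at the shallowest unsaturated level.
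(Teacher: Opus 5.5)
The paper states this proposition without proof, so your argument has to stand on its own, and its core does. The reduction to internal-node profiles $(c_0,\dots,c_{h-1})$, the minimal profile $c_{h-1-r}=\lceil k/2^r\rceil$, the filling step, and the increment $N(k)-N(k-1)=\hat b_{k-1}$ are all correct. The filling step works because raising any level with $c_i<2c_{i-1}$ keeps the profile admissible and never lowers $c_{h-1}$. You have therefore rigorously established $K_h(n)=\max\{k\colon N(k)\le n\}$ with $N(k)=h+\sum_{i=1}^{k-1}\hat b_i$.

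The final translation is wrong, and under the reading you adopt it cannot be made to match the printed indices. By your own formula, $K_h(n)=j$ exactly when $N(j)\le n<N(j+1)$. So it is the value $j$, not $j+1$, that is taken $\hat b_j$ times for $1\le j<2^{h-1}$. Take the min reading $\hat a_m=\min\{k\colon \hat b_1+\dots+\hat b_k\ge m\}$. Then $\hat a$ has the same repetition pattern starting from $\hat a_1=1$, hence
\[
K_h(n)=\max\Bigl\{k\colon \sum_{i=1}^{k-1}\hat b_i\le n-h\Bigr\}=\min\Bigl\{k\colon \sum_{i=1}^{k}\hat b_i\ge n-h+1\Bigr\}=\hat a_{n-h+1},
\]
not $\hat a_{n-h+2}$. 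The small-case check you planned exposes this rather than fixing it. For $h=4$, Figure~\ref{fig:h4} has the red cells $(4,1)$ and $(14,7)$. The printed list gives $(4,\hat a_2)=(4,2)$ and $(14,\hat a_{12})=(14,8)$.

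What your argument actually proves is $\La_h=\{(n,\hat a_{n-h+1})\colon h\le n\le 2^h-1\}$. This is the printed list with every index of $\hat a$ lowered by one. Recovering the printed indices requires re-indexing $\hat a$, for instance by prepending an extra term $1$, as was done with $a_0=1$ in Section~\ref{sec:a_n}. You should end with the corrected statement and flag the discrepancy explicitly, rather than asserting agreement with $\hat a_{n-h+2}$.
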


\begin{corollary}\label{cor:|S_h|}
 The area of the nonzero domain $S_h$ is equal to $2^{h-2}(2^{h-1}-h+2)$.
\end{corollary}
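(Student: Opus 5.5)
The area $|S_h|$ will be computed column by column. For every $n$, the set of $k$ with $(n,k)\in S_h$ is expected to be an interval whose lower end is the right boundary $\Gamma_h$ (or $k=1$) and whose upper end is the upper boundary $\Lambda_h$. The right boundary is given by Proposition~\ref{prop:Gamma_h_exact_form} and the upper boundary by Proposition~\ref{prop:Lambda_h_exact_form}.

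\textbf{Step 1: shape of $S_h$.} First I will show that there are no holes. Fix $h$ and $n$. Then $(n,k)\in S_h$ exactly when
\[
  \max\bigl(1,\,n-2^{h-1}+1\bigr)\le k\le \lambda_h(n)
  \quad\mbox{and}\quad h\le n\le 2^h-1 ,
\]
where $\lambda_h(n)$ is the $\Lambda_h$-value in column $n$. The lower bound on $k$ is exactly Lemma~\ref{lem:parameters_relations}, since $n-k+1\le 2^{h-1}$. To show every intermediate $k$ is attained, I argue by induction on $h$ through the recurrence~\eqref{eq:initial_recurrence} restricted to height. A tree of height $h$ with $2k$ anchors comes from a tree of height $h-1$ with $2m$ anchors, $k\le 2m$, by expanding $k$ of its anchors. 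Starting from a witness at the top $(n,\lambda_h(n))$, I can kill one expanded anchor of the last step and expand instead a dead leaf earlier in the tree. This keeps $n$ and lowers $k$ by one, and it works as long as $n-k+1<2^{h-1}$, i.e.\ while there is still room before the full tree of height $h-1$. This interval property is the step I expect to be the main obstacle: the modification must preserve the height and be possible at every intermediate $k$. I would try an explicit construction based on near-complete trees.

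\textbf{Step 2: summation.} Given Step 1, I split the sum into two pieces:
\[
  |S_h| = \sum_{n=h}^{2^h-1}\lambda_h(n) \;-\; \sum_{n=2^{h-1}+1}^{2^h-1}\bigl(n-2^{h-1}\bigr).
\]
The second sum equals $\binom{2^{h-1}}{2}=2^{h-2}(2^{h-1}-1)$.

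\textbf{Step 3: the first sum.} By Proposition~\ref{prop:Lambda_h_exact_form}, the first sum is $\sum_{j=2}^{2^h-h+1}\hat a_j$. Here $\hat a$ is nondecreasing in unit steps and takes the value $k$ exactly $\hat b_k$ times. So I group terms by value: $\sum_j \hat a_j=\sum_k k\,\hat b_k$, with appropriate truncation at the two ends.

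The ruler function sums to $\sum_{k=1}^{2^{r}}\hat b_k=2^{r+1}-1$. Hence the index $2^h-h+1$ should land near the end of the block of value $k=2^{h-1}$, which matches the top corner $(2^h-1,2^{h-1})$ of $\Gamma_h$.

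Then I use $\sum_{k\le 2^r}k\,\hat b_k$. This is computable from $\hat b_k=1+\nu_2(k)$: one gets $\sum_{k\le N}k=N(N+1)/2$ plus $\sum_{i\ge1}2^i\binom{N/2^i+1}{2}$. I correct for the missing index $j=1$ and for the tail beyond $2^h-h+1$. Combining everything and simplifying should give $2^{h-2}(2^{h-1}-h+2)$.

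Finally I check the formula against $h=4$: it gives $4\cdot 6=24$, which matches the 24 colored cells in Figure~\ref{fig:h4}. This check also calibrates the end corrections.
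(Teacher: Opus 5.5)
Your column-by-column count between the diagonal $n-k=2^{h-1}-1$ and the upper boundary $\La_h$ is the natural way to get the corollary from Propositions~\ref{prop:Gamma_h_exact_form} and~\ref{prop:Lambda_h_exact_form}. The step you feared is actually fine: your exchange move in Step~1 works. If $n-k<2^{h-1}-1$, some level $j\le h-2$ is not full. The topmost such level lies directly below a full level, so it contains a dead leaf. The height is kept as long as $k\ge 2$. Step~2 is also correct. The genuine gap is Step~3, which is where the corollary actually lives. You leave it as ``should give'', and the end bookkeeping is set up incorrectly.

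With $\hat a$ read as you read it, the value $k$ is taken exactly $\hat b_k=1+\nu_2(k)$ times, so $\hat a=1,2,2,3,4,4,4,5,6,6,7,8,\ldots$. With that reading, your index range $j=2,\ldots,2^h-h+1$ is shifted by one.

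\textbf{Lower end.} Column $n=h$ consists of trees with exactly one internal node per level, so its top is $k=1=\hat a_1$. In Figure~\ref{fig:h4}, the column tops for $n=4,\ldots,15$ are $\hat a_1,\ldots,\hat a_{12}$.

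\textbf{Upper end.} Your heuristic here is also wrong. Since $\sum_{k\le 2^{h-1}}\hat b_k=2^h-1$ and $\hat b_{2^{h-1}}=h$, the block of value $2^{h-1}$ occupies indices $2^h-h,\ldots,2^h-1$. So the last column sits at the \emph{start} of that block, not near its end. Consistently, column $2^h-2$ must carry $2^{h-1}-1$, since only the complete tree has $2^{h-1}$ internal nodes at level $h-1$.

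Carried out with your range, the computation gives $2^{h-2}(2^{h-1}-h+2)+2^{h-1}-1$, i.e.\ $31$ instead of $24$ for $h=4$. Tuning the end corrections to fit $h=4$ is not a proof.

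To fix it, fix the alignment from $\lambda_h(h)=1$, that is, $\lambda_h(n)=\hat a_{n-h+1}$. Then
\[
  \sum_{n=h}^{2^h-1}\lambda_h(n)=\sum_{k=1}^{2^{h-1}-1}k\,\hat b_k+2^{h-1}.
\]
Your formula with $N=2^r$ gives $\sum_{k=1}^{2^r}k\,\hat b_k=2^{r-1}(2^{r+1}+r)$. Take $r=h-1$ and remove the term $2^{h-1}\cdot h$; the sum above then equals $2^{h-2}(2^h-h+1)$. Subtracting $2^{h-2}(2^{h-1}-1)$ yields $2^{h-2}(2^{h-1}-h+2)$, as claimed.
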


\begin{proposition}\label{prop:S_h-scaling-limit}
 The scaling limit of the nonzero domain $S_h$ (normalized by $2^{h-1}$), as $h\to\infty$, is the triangle with vertices $(0,0)$, $(1,0)$, and $(2,1)$. 
\end{proposition}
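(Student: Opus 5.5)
The plan is to describe $S_h$ precisely enough that its rescaled boundary can be read off, and then pass to the limit in the Hausdorff sense. By Proposition~\ref{prop:Gamma_h_exact_form}, the right boundary $\Gamma_h$ lies on the line $n-k=2^{h-1}-1$. Lemma~\ref{lem:parameters_relations} shows that every point of $S_h$ satisfies $n-k\le 2^{h-1}-1$. It also gives $n-k\ge h-1$, and trivially $k\ge1$. After dividing by $2^{h-1}$, the first constraint becomes $x-y\le 1+o(1)$, the second becomes $x-y\ge o(1)$, and $y\ge 0$. This gives the lower side $[(0,0),(1,0)]$, the right side along $x-y=1$, and a candidate left side along $x=y$.

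We then identify the upper boundary. By Proposition~\ref{prop:Lambda_h_exact_form}, the top cell in column $n$ (for $h\le n\le 2^h-1$) has height $\hat a_{n-h+2}$. The ruler function satisfies $\sum_{i=1}^{k}\hat b_i = 2k - s_2(k)$, where $s_2$ is the binary digit sum. Since $s_2(k)=O(\log k)$, we get $\hat a_N = N/2 + O(\log N)$. Hence the upper boundary rescales to the curve $y = x/2$ for $x\in[0,2]$, which ends at $(2,1)$; this matches the last point $(2^h-1,2^{h-1})$ of $\Gamma_h$. The rescaled set is therefore contained in an $o(1)$-neighbourhood of the triangle bounded by $y=0$, $x-y=1$ and $y=x/2$, whose vertices are $(0,0)$, $(1,0)$ and $(2,1)$. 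The bound $n-k\ge h-1$ is weaker than $y\le x/2$, since $y\le x/2$ already forces $x-y\ge x/2\ge 0$, so it plays no role in the limit.

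For the converse inclusion we need every point of the triangle to be approximated. The plan is to show that each column of $S_h$ is an interval in $k$. For $n\le 2^{h-1}$ this interval runs from $k=1$ to $\hat a_{n-h+2}$; for larger $n$ it runs from $n-2^{h-1}+1$ to $\hat a_{n-h+2}$. Figure~\ref{fig:h4} suggests this, and it is consistent with Corollary~\ref{cor:|S_h|}.

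We would prove convexity of columns by local modifications of trees. Replacing a pair of sibling anchors, or converting an anchor subtree into a leaf, changes $k$ by one while keeping $n$ and $h$ fixed, after compensating the internal-node count at a shallower level. The main obstacle is carrying out these modifications without changing the height or leaving the column.

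A simpler fallback avoids full convexity. It suffices to exhibit, for each rational point $(x,y)$ of the triangle, trees of height $h$ with $(n,k)\approx 2^{h-1}(x,y)$. We would build these from a complete tree of suitable depth, which attains the $\Gamma_h$ boundary, combined with a bushy tree attaining $\Lambda_h$, and then interpolate by pruning. With either argument, Hausdorff convergence to the triangle follows.
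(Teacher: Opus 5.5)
\textbf{Verdict: genuine gap.} Your outer bound is correct. It matches how the paper reads the limit off the boundaries: $k\ge1$, the line $n-k\le 2^{h-1}-1$ from the lemma (attained along $\Gamma_h$), and the staircase $\Lambda_h$. Your identity $\sum_{i\le k}\hat b_i=2k-s_2(k)$ gives $\hat a_N=N/2+O(\log N)$, reading the $\max$ in the definition of $\hat a_n$ as a $\min$. There is also a harmless index shift: the figure shows the top of column $n$ is $\hat a_{n-h+1}$, e.g.\ column $n=h$ is $\{1\}$. The gap is the reverse inclusion. The boundary propositions only locate the extreme cells of $S_h$; they do not exclude holes. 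So you have only shown that every limit point of $2^{1-h}S_h$ lies in the triangle, not that every point of the triangle is such a limit. The step that would give this, column convexity, is exactly the one you leave as ``the main obstacle''. The pruning fallback is too vague to check.

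\textbf{How to close it.} Encode a height-$h$ tree by its internal-node profile $(i_0,\dots,i_{h-1})$. Anchors sit at level $h$ and dead leaves at levels $1,\dots,h-1$. So the conditions are $i_0=1$ and $1\le i_j\le 2i_{j-1}$, with $k=i_{h-1}$ and $n=\sum_j i_j$, and $S_h$ is exactly the set of pairs $(n,k)$ arising this way.
\begin{itemize}
\item Since $i_j\le 2^j$, the condition $n-k<2^{h-1}-1$ forces some $j\le h-2$ with $i_j<2^j$. Take the smallest such $j$; then $j\ge1$ and $i_{j-1}=2^{j-1}$.
\item If also $k\ge2$, increase $i_j$ by one and decrease $i_{h-1}$ by one. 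All constraints still hold, $n$ is unchanged and $k$ drops by one. This is your local modification, and in profile language it is immediate.
\item Starting from the top cell of column $n$, this fills the column down to $\max(1,n-2^{h-1}+1)$. That is your claimed description, and the triangle follows.
\end{itemize}

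\textbf{Alternative via the area formula.} You can skip convexity by using $|S_h|=2^{h-2}(2^{h-1}-h+2)$. The rescaled area $(2^{h-1}-h+2)/2^h$ tends to $1/2$, the area of the triangle. Together with your outer bound, this shows the part of the triangle not covered by rescaled cells has measure $o(1)$. A disc of radius $r$ centred in the triangle meets it in area at least $cr^2$. Hence every point of the triangle is within $o(1)$ of $2^{1-h}S_h$.
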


\section{Random sampling}
\label{sec:sampling}

\subsection{Combinatorial context}

We are focusing on binary trees, where the atoms under consideration are the leaves. 
An analogous approach can be applied to binary trees according to their number of internal nodes.

\begin{definition}
    The \emph{level} of a node in a binary tree is the distance (in terms of traversed edges) from the root to that node. The root is at level $0$.
\end{definition}

\begin{definition}
    The profile of a binary tree is the sequence $(\ell_0, \ell_1, \ldots, \ell_h)$ of the
    nonnegative numbers of leaves at each level.
    The index $h$ corresponds to the \emph{height} of the tree. The \emph{size} of the tree 
    (\emph{i.e.} the total number of leaves it contains) is equal to the sum $\sum_i \ell_i$.
\end{definition}
For a tree reduced to a single leaf (the root), we get the profile $(1)$.
For any other binary tree with height $h > 0$, we have $\ell_0 = 0$ (the root is not a leaf).

\begin{fact}[Kraft-McMillan equality]
    There exists a binary tree with profile $(\ell_0, \ell_1, \ldots, \ell_h)$
    if and only if
    \[
        \sum_{i=0}^h \frac{\ell_i}{2^{i}} = 1.
    \]
\end{fact}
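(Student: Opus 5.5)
We prove the two implications separately; the statement is the full binary-tree version of Kraft's equality. Here every internal node has exactly two children, so an equality holds rather than an inequality.

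\emph{Necessity.} We argue by induction on the number of internal nodes, attaching to each node at level $i$ the weight $2^{-i}$. For the single-leaf tree the sum is $\ell_0/2^0=1$. Now take any other binary tree and pick an internal node at level $i$ whose two children are leaves. Both children sit at level $i+1$, so replacing them (and their parent) by a single leaf changes the sum by $2^{-i}-2\cdot 2^{-(i+1)}=0$. The result is a binary tree with one fewer internal node, so by the induction hypothesis its sum equals $1$, and hence so does the original sum. Equivalently, one can use the recursive decomposition: if the root is internal with subtrees of profiles $\ell'$ and $\ell''$, then $\ell_i=\ell'_{i-1}+\ell''_{i-1}$, so $\sum_i \ell_i 2^{-i}=\tfrac12\sum\ell'_j2^{-j}+\tfrac12\sum\ell''_j2^{-j}=1$.

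\emph{Sufficiency.} Assume the equality holds, and build the tree level by level. This is exactly the growth process of Section~\ref{sec:growth_model}, with anchors playing the role of ``open slots''. Let $m_i$ be the number of anchors present at level $i$. Set $m_0=1$, and let $m_{i+1}=2(m_i-\ell_i)$, since we declare $\ell_i$ of the $m_i$ anchors dead leaves and expand the rest into internal nodes. The key step is to show that $0\le \ell_i\le m_i$ for every $i\le h$, and that $m_h=\ell_h$, so that the process ends with no anchors left.

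To get this, we prove by induction the invariant
\[
\frac{m_i}{2^i}=1-\sum_{j<i}\frac{\ell_j}{2^j}=\sum_{j\ge i}\frac{\ell_j}{2^j}.
\]
The left equality follows directly from the recurrence. The right equality uses the hypothesis. Multiplying by $2^i$ gives $m_i=\sum_{j\ge i}\ell_j2^{i-j}\ge \ell_i$, since all $\ell_j\ge0$. At level $h$ it gives $m_h=\ell_h$. Hence at each level we can legitimately choose $\ell_i$ anchors to kill, and at the last level all anchors become leaves. The resulting binary tree has profile $(\ell_0,\dots,\ell_h)$, with height exactly $h$ provided $\ell_h>0$, which is implicit in calling $h$ the height.

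The only delicate point is the feasibility check $\ell_i\le m_i$, where nonnegativity of all later terms is essential. Once the telescoping invariant above is written down, this is immediate, so we expect no real obstacle.
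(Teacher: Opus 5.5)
Your proof is correct, and it does not follow the paper, which gives no argument here and only refers to the coding-theory literature~\cite{CT06}. The standard proof there is a global count at the deepest level. A leaf at level $i$ has $2^{h-i}$ descendants among the $2^h$ positions of the complete tree at level $h$. These sets are disjoint, and they cover the whole level exactly when every internal node has two children. The converse places leaves greedily in order of increasing depth, using that fewer than $2^i$ positions at level $i$ are blocked by earlier leaves.

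Your necessity step (cherry contraction, or the root decomposition) is a genuinely different, local argument. Your sufficiency step is in substance the greedy argument: $m_i=2^i-\sum_{j<i}\ell_j2^{i-j}$ is exactly the number of unblocked positions at level $i$. You organize it instead as a run of the growth process of Section~\ref{sec:growth_model}.

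What your version buys is that it lives entirely inside the paper's framework. The quantity $m_i-\ell_i$ is the number $i_i$ of internal nodes at level $i$, so your recurrence is Proposition~\ref{prop:intern-profile}. Your feasibility bound $\ell_{k+1}\le m_{k+1}=2i_k$ says precisely that every binomial factor in Proposition~\ref{prop:counting_trees} is nonzero for a profile satisfying the equality. The descendant count, for its part, gives the inequality $\sum_i\ell_i2^{-i}\le 1$ for arbitrary prefix codes in one stroke, with equality characterizing full trees.

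Two small remarks. In the contraction induction, say why a cherry exists: any internal node of maximal depth has two leaf children. You could also drop that induction altogether. In any binary tree, the number $m_i$ of nodes at level $i$ satisfies the same recurrence $m_{i+1}=2(m_i-\ell_i)$, with $m_{h+1}=0$. Evaluating your identity $m_i/2^i=1-\sum_{j<i}\ell_j2^{-j}$ at $i=h+1$ then gives necessity directly, so a single telescoping identity handles both directions.
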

\noindent This is a classical result in the context of code theory; see, for instance, \cite{CT06}.

\begin{proposition}
\label{prop:intern-profile}
    Let $h$ be a positive integer and $T$ be a binary tree with profile $(\ell_0, \ell_1, \ldots, \ell_h)$.
    Its internal node profile (\emph{i.e.} the sequence of the number of internal nodes at each level) is given by $(i_0 , i_1, \ldots, i_{h-1})$ satisfying 
    \[
        \begin{cases}
            i_0 = 1 \\
            i_k = 2i_{k-1} - \ell_k, \quad \text{for } 1 \le k \le h-1.
        \end{cases}
    \]
\end{proposition}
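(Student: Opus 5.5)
The plan is a direct counting argument over the levels of $T$, proceeding by induction on $k$. It uses only the fact that $T$ is a full binary tree: every node is either a leaf or an internal node with exactly two children. Every node at level $k\ge 1$ is the child of exactly one node at level $k-1$, and that parent is necessarily internal. So if $N_k$ denotes the total number of nodes at level $k$, then
\[
  N_k = 2\, i_{k-1} \qquad \mbox{for } 1\le k\le h,
\]
because each of the $i_{k-1}$ internal nodes at level $k-1$ contributes exactly two children, and distinct parents have distinct children.

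On the other hand, each node at level $k$ is either a leaf or an internal node, which gives $N_k = \ell_k + i_k$. Combining the two expressions yields $i_k = 2i_{k-1}-\ell_k$ for $1\le k\le h-1$. For the base case, $h>0$ means the root is not a leaf (indeed $\ell_0=0$, as noted after the definition of the profile). The root is therefore the unique internal node at level $0$, so $i_0=1$.

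It remains to justify that the internal node profile is indexed exactly by $0,\ldots,h-1$. At level $h$, the deepest level, every node is a leaf, since an internal node there would have children at level $h+1$, contradicting the definition of the height. Hence $i_h=0$ and the identity at $k=h$ reads $0=2i_{h-1}-\ell_h$. This is consistent but is not part of the stated recurrence; it is also why the sequence of internal nodes stops at $h-1$.

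There is no genuine obstacle here. The only point requiring care is to state explicitly that the parent map from level $k$ to the internal nodes of level $k-1$ is exactly two-to-one, which holds because internal nodes always have two children in our (locally complete) model. The recurrence then follows without invoking the Kraft--McMillan equality.
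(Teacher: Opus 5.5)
Your double-counting argument is correct: counting the nodes at level $k$ once as the $2i_{k-1}$ children of the internal nodes at level $k-1$ and once as $\ell_k+i_k$, together with $i_0=1$ because $h\ge 1$ forces the root to be internal, is exactly the immediate reasoning the paper leaves implicit. The paper states the proposition without proof, and the same count of $2i_k$ child positions at level $k+1$ underlies the factors $\binom{2i_k}{\ell_{k+1}}$ in Proposition~\ref{prop:counting_trees}; the only cosmetic point is that no induction is needed, since each identity is a direct count at a single level.
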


\begin{definition}
    We say that a profile $(\ell_0, \ell_1, \ldots, \ell_h)$ is \emph{valid} if
    there exists at least one binary tree having this profile.
\end{definition}

\begin{proposition}
\label{prop:counting_trees}
    There exists a unique binary tree, which is reduced to a single leaf, having the profile~$(1)$.
    Let $(\ell_0, \ell_1, \ldots, \ell_h)$, with $h\ge 1$, be a valid profile.
    The number of binary trees having this profile is 
    \[
        \prod_{k=0}^{h-1} \binom{2i_{k}}{\ell_{k+1}},
    \]
    where the sequence $(i_k)_{k=0\dots h-1}$ is defined in Proposition~\ref{prop:intern-profile}.
\end{proposition}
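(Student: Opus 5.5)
The plan is to build the tree level by level, from the root downward, and show that the choices made at different levels are independent. For the profile $(1)$ the claim is immediate: a tree whose root is a leaf has height $0$ and is unique. So assume $h\ge1$. Then $\ell_0=0$ and the root is internal, hence $i_0=1$, in agreement with Proposition~\ref{prop:intern-profile}.

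The key observation concerns level $k+1$ for $0\le k\le h-1$. In a binary tree, every node at level $k+1$ is a child of an internal node at level $k$. Since there are $i_k$ such internal nodes, level $k+1$ has exactly $2i_k$ positions, naturally ordered left to right. Each of these positions is either a leaf or an internal node. By Proposition~\ref{prop:intern-profile}, the profile forces $\ell_{k+1}$ of them to be leaves and the remaining $2i_k-\ell_{k+1}=i_{k+1}$ to be internal. For $k=h-1$ we have $i_h=0$, since $2i_{h-1}=\ell_h$: the tree has height $h$, so every node at level $h$ is a leaf. This last equality follows from the recursion together with the Kraft--McMillan equality, which the validity of the profile guarantees.

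Next I will construct a bijection. To a tree with the given profile, associate the sequence $(S_0,\dots,S_{h-1})$, where $S_k$ is the set of positions (an $\ell_{k+1}$-subset of $\{1,\dots,2i_k\}$) occupied by leaves at level $k+1$. Conversely, any such sequence of subsets reconstructs a unique tree. Starting from the root, at each level the chosen subset decides which of the $2i_k$ children are leaves. The other $i_{k+1}$ children become internal nodes, and these supply the $2i_{k+1}$ ordered positions of the next level. The resulting tree has exactly the prescribed profile. The two maps are inverse to each other, because a plane binary tree is determined by the leaf/internal status of every node, read in breadth-first order.

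The number of trees is therefore the number of such sequences. At level $k$ there are $\binom{2i_k}{\ell_{k+1}}$ choices, and the choice is independent of what happened at earlier levels except through $i_k$, which is fixed by the profile. Multiplying over $k$ gives the product formula.

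I expect the main subtlety to lie in the bookkeeping rather than in any deep step. Two points need care. First, the $2i_k$ positions at each level must be canonically ordered, which is what makes the map injective. Second, the final level must consist only of leaves ($i_h=0$), so that the process terminates at height exactly $h$. Validity of the profile ensures that all the $i_k$ are nonnegative and that $i_k\ge1$ for $k<h$, so every binomial coefficient is well defined.
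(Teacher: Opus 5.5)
Your proof is correct and takes essentially the same level-by-level route the paper relies on. The factor $\binom{2i_k}{\ell_{k+1}}$ counts which of the $2i_k$ slots on level $k+1$ are leaves, just as $\binom{2m}{k}$ in the growth-process recurrence~\eqref{eq:initial_recurrence} counts which anchors are expanded; the paper's bottom-up sampler reads the same product in reverse, shuffling $i_{k+1}$ internal nodes with $\ell_{k+1}$ leaves. The paper states the formula without a written proof, and your breadth-first bijection and the check $2i_{h-1}=\ell_h$ (which also follows from Proposition~\ref{prop:intern-profile} applied to any tree realizing the profile, without Kraft--McMillan) supply the details it leaves implicit.
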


\subsection{Uniform random sampling of trees with a given profile}

A natural way to sample binary trees with a given profile is the following. Start with the root of the tree;
if the latter is a leaf, then stop; otherwise, we have to decide 
what are the valid profiles of the two children. In the classical recursive method~\cite{NW75,FZVC94},
even for sampling trees of a given size, the necessary computations are heavy.
In our case, the combinatorial complexity is much higher, since we have to consider the profiles of the two subtrees,
which are not independent (like in the case of size) but are also constrained to be valid profiles.
In the context of binary decision diagram sampling~\cite{CG20}, such a~top-down approach has been addressed
to generate structures with a given profile, but it has finally been overtaken by an iterative
approach~\cite{CG23}, level by level, only focusing on a given level at a time without the need to consider the rest of the profile.

Here in the context of binary trees, such an iterative approach can be developed as well
(and will be more efficient than the one for decision diagrams), but an even more efficient approach is possible, 
looking to the tree from the bottom to the top (and for a given level, from left to right).

We can sample a binary tree with a given profile by starting from the last level, 
and then iteratively going up to the root, level by level, and for each level.
In fact, in Proposition~\ref{prop:counting_trees}, we can look to the product formula counting the trees with a given profile
from the last factor to the first ones, \emph{i.e.} from the deepest levels of the trees to the root level.
\begin{algorithm}[H]
 \begin{algorithmic}[1]
    \renewcommand{\algorithmicrequire}{\textbf{Input:}}
    \renewcommand{\algorithmicensure}{\textbf{Output:}}
    \Require $L = (\ell_0, \ell_1, \ldots, \ell_h)$ 
    \Ensure a uniform binary tree with profile $L$
    \Function{uniform\_tree}{$L$}
        \State $T$ is a sequence of $\ell_h/2$ nodes, each one pointing to $2$ leaves
        \For{$i$ from $h-1$ to $0$ with step $-1$}
          \State $T'$ = shuffling $T$ with a sequence of $\ell_i$ leaves
          \State $T$ is a sequence of $len(T')/2$ nodes with the $i$-th node pointing to nodes $2i$ and $2i+1$ of $T'$ 
      \EndFor
      \State \textbf{return} first (and single) node of $T$, corresponding to
      the root of the tree
    \EndFunction
	\end{algorithmic}  
	{\footnotesize
			\begin{flushleft}
				Function $len$ returns the number of elements of a sequence.
			\end{flushleft}	
		}
 \caption{Uniform random sampling of a tree with profile $(\ell_0, \ell_1, \ldots, \ell_h)$}
 \label{algo:unif_tree}
\end{algorithm}
The shuffling operation is presented in detail in Algorithm 3 from~\cite{BDGP17}.
\begin{proposition}
  Algorithm~\ref{algo:unif_tree} is correct and optimal in time, space, and random bit consumption.
\end{proposition}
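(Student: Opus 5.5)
Correctness comes first. I will show that the algorithm realizes a bijection between sequences of shuffle outcomes and binary trees with profile $L$. Then I will read off the complexities.

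\textbf{Invariant.} After the iteration for level $i$, the sequence $T$ is an ordered list of the $i_{i-1}$ subtrees rooted at the internal nodes of level $i-1$, in left-to-right order. By Proposition~\ref{prop:intern-profile}, just before the iteration at level $i$ the list $T$ contains the $i_i$ internal nodes of level $i$. Shuffling with $\ell_i$ leaves gives a sequence of length $i_i+\ell_i = 2i_{i-1}$, which is even, so the pairing step is well defined. For $i=0$ we get $i_0+\ell_0 = \ldots$; the final list has length $1$, which holds because the profile is valid. At the first step, $\ell_h = 2i_{h-1}$.

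\textbf{Bijection.} Every binary tree with profile $L$ is obtained exactly once. Given such a tree, read level $i$ from left to right. This gives a word over $\{$internal, leaf$\}$ with $i_i$ internal letters and $\ell_i$ leaf letters, and the internal letters appear in the same relative order as the list produced from deeper levels. Such a word is exactly a shuffle of $T$ with the leaves. Conversely, the shuffles at all levels determine the tree. So the outcomes are in bijection with the trees, and their number is $\prod_{k}\binom{i_k+\ell_k}{\ell_k}=\prod_k\binom{2i_{k-1}}{\ell_k}$, in agreement with Proposition~\ref{prop:counting_trees}. Since the shuffling of \cite{BDGP17} returns each of the $\binom{a+b}{b}$ interleavings with equal probability, and the shuffles at different levels are independent, the output is uniform.

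\textbf{Optimality.} The level-$i$ iteration costs $O(i_i+\ell_i)$ operations plus the cost of the shuffle. Summing over levels gives $O(\text{number of nodes})=O(n)$, where $n=\sum\ell_i$, and this is optimal because the output itself has size $n$. Space is $O(n)$ for the same reason: only the current list and the tree are stored. For random bits, the information-theoretic lower bound is $\log_2$ of the number of trees, $\sum_k\log_2\binom{2i_{k-1}}{\ell_k}$. The shuffling of \cite{BDGP17} consumes $\log_2\binom{a+b}{b}+O(1)$ bits in expectation, optimal up to lower-order terms. Summing over the $h$ levels gives the bound $\log_2(\#\text{trees})+O(h)$.

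\textbf{Main obstacle.} The delicate point is this last one: an additive $O(1)$ loss per level may not be negligible when $h$ is comparable to $n$ (as in caterpillar-like profiles). In that case I would argue that levels with $\ell_i=0$, or with $\binom{2i_{i-1}}{\ell_i}=1$, need no random bits at all, and should be skipped. For the remaining levels I would use that each such binomial is at least $2$, so $\log_2\binom{\cdot}{\cdot}\ge1$. This makes the $O(1)$ overhead per level absorbed into a constant factor of the entropy, which gives optimality in the asymptotic sense.
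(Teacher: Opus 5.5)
Your proposal is correct and follows the paper's own route: read the product formula of Proposition~\ref{prop:counting_trees} from the deepest level up to the root, and let uniformity and random-bit consumption come level by level from the shuffle of~\cite{BDGP17}, while you additionally make explicit the level-order bijection and the per-level $O(1)$ overhead that the paper passes over (your resulting bit bound is optimality up to a constant factor, the same sense as for time and space). Do complete the dangling ``$i_0+\ell_0=\ldots$'': since $i_0+\ell_0=1$ is odd, the $i=0$ pass of the loop as written would try to pair a list of length one, so the loop must be read as stopping at $i=1$, and the profile $(1)$ must be treated separately.
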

Optimality stands for the time complexity, 
for the space complexity, and for the consumption of random bits (whose derivation comes from Proposition~\ref{prop:counting_trees} and the shuffling operation from~\cite{BDGP17}).


\section{Perspectives}

As a primary perspective, the connection between Mandelbrot polynomials and the framework of Flajolet and Odlyzko~\cite{FO82} allows for a precise characterization of the limit distribution of the deepest leaves. This approach provides a new way to analyze the tree's boundary and could be generalized to other substitution rules, offering a unified combinatorial view of extinction-based growth processes.

Algorithmically, the iterative bottom-up sampling introduced in Section~\ref{sec:sampling} can be broadly extended. Natural next steps include the uniform generation of binary forests and the sampling of trees with partial profiles (e.g., profiles with "holes" representing intermediate levels of arbitrary sizes). 

\newpage

\paragraph{Acknowledgments}
This work is partially funded by \textsc{anr-fwf} project \textsc{PAnDAG} ANR-23-CE48-0014 and \textsc{anr} project \textsc{COMETA-GAE} ANR-25-CE48-0602. 

\bibliographystyle{eptcs}
\bibliography{Growing_binary_trees}



\end{document}